\documentclass[reqno]{amsart}

\usepackage{amsmath}
\usepackage{amssymb}
\usepackage{amsfonts}
\usepackage{amsthm}

\usepackage{hyperref}

\usepackage{graphicx}
\usepackage{tikz, tikz-3dplot}

\usepackage{subcaption}

\usepackage{algorithm}
\usepackage{algpseudocode}

\newtheorem*{thm}{Theorem}
\newtheorem{lemma}{Lemma}

\newtheorem*{corollary}{Corollary}
\newtheorem*{conj}{Conjecture}

\newcommand{\SO}{\text{SO}}

\begin{document}

\title[]{A stellated tetrahedron that \\is probably not Rupert}
\author[]{Tony Zeng}
\address{Department of Mathematics, University of Washington, Seattle, WA 98195, USA}
\email{txz@uw.edu}

\begin{abstract} 
    A convex polyhedron is Rupert if a hole can be cut into it (making its genus \(1\)) such that an identical copy of the polyhedron can pass through the hole. Resolving a conjecture of Jerrard-Wetzel-Yuan, Steininger and Yurkevich recently constructed a convex polyhedron which is not Rupert. We propose a search for the simplest possible non-Rupert polyhedron and provide numerical evidence suggesting that a particular stellated tetrahedron is not Rupert.  The computational techniques utilize linear program solvers to compute the largest possible scalings of polygons that can be translated to fit in other polygons. The relative simplicity of the stellated tetrahedron as compared to other polyhedra allows this more rudimentary check to be computationally tractable. In particular, we show that over \(88\%\) of a particular encoding of \(\SO(3) \times \SO(3)\) equipped with the standard measure does not yield a Rupert passage.
\end{abstract}

\maketitle

\section{Introduction}

\subsection{Introduction}
According to John Wallis \cite{wallis}, Prince Rupert of the Rhine, whom Wallis calls \textit{vir magno ingenio \& sagacitate} [a man of great ingenuity and sagacity] posed the question (and wagered in favor of its answer being the affirmative) of whether a cube can ``pass through itself'', i.e. whether hole can be drilled into a cube such that an identical cube can be passed through that hole. 

\begin{figure}[h!]
    \centering
    \includegraphics[scale=0.25,trim={31cm 8.7cm 10cm 14cm},clip]{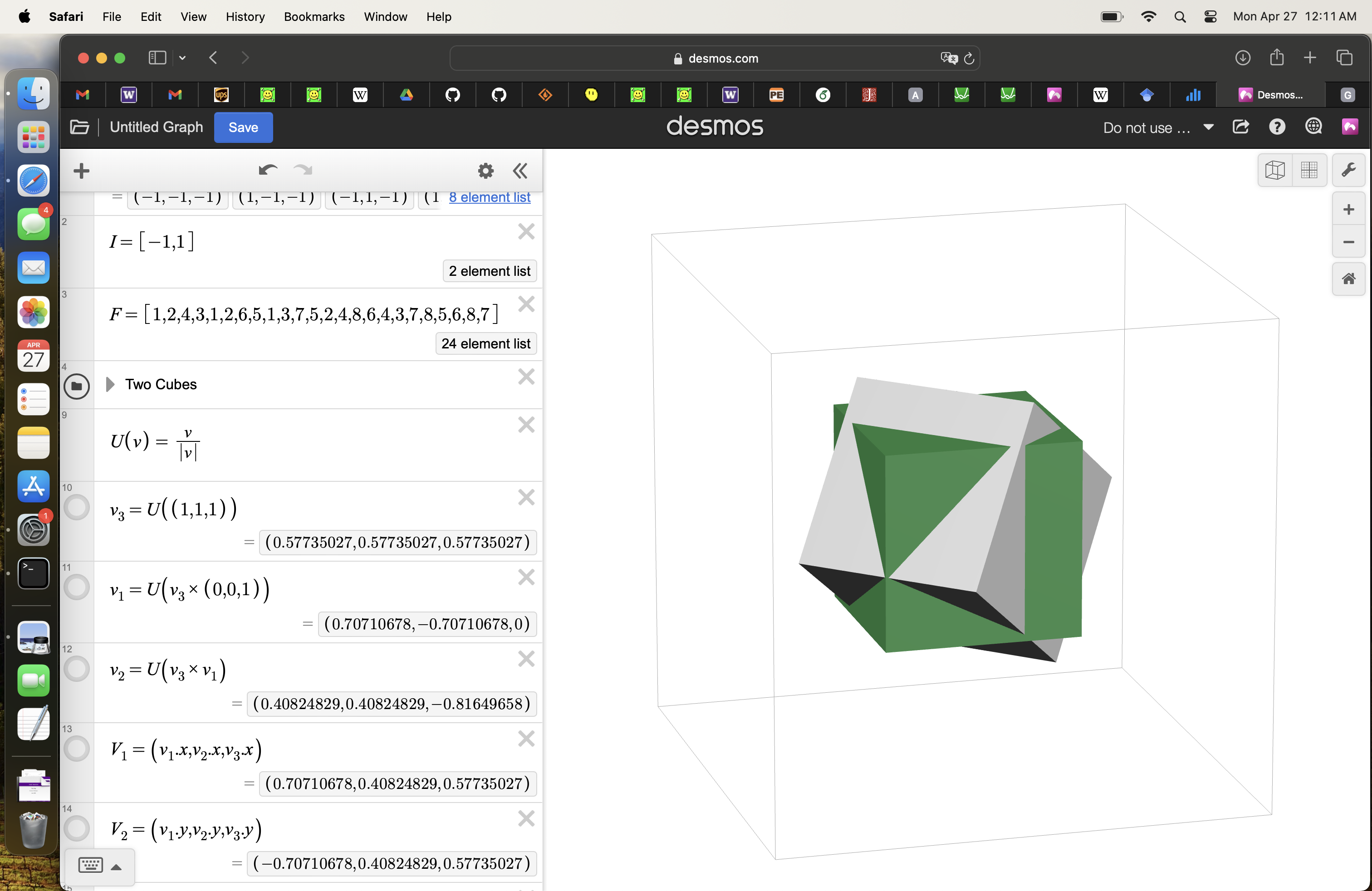} \(\qquad\)
    \includegraphics[scale=0.25,trim={31cm 8.7cm 10cm 14cm},clip]{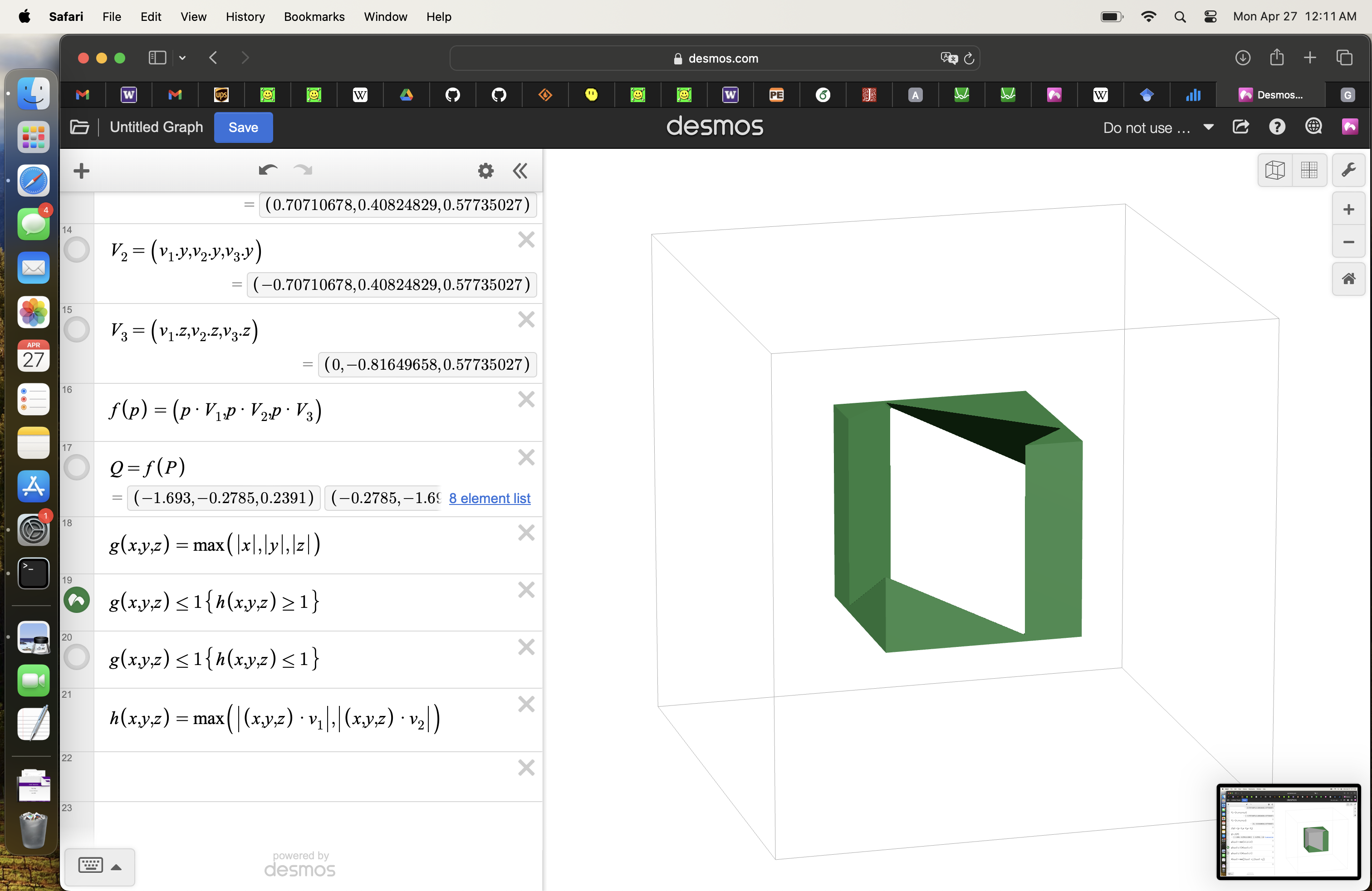}
    \caption{An illustration that the cube is Rupert. }
    \label{fig:cube}
\end{figure}

Rupert's question was answered in the affirmative by Wallis himself, who showed that the flat projection of the cube into a square can fit inside the regular hexagonal projection of the cube along its space diagonal (see Figure \ref{fig:cube}). In fact, Wallis's construction allows a cube with side length \(\sqrt{6} - \sqrt{2} \approx 1.03\) to pass through a hole drilled through a unit cube. Later, Peter Nieuwland (1764 -- 1794) showed that an by using a slightly different projection, the same is true of a cube with side length \(\sqrt{18} / 4 \approx 1.06\) and that this improvement is optimal. Nieuwland himself never published this; it was published posthumously by Nieuwland's advisor in \cite{van1816grondbeginsels}.

To make all of this precise, we introduce the following notation. We use \(\mathcal{P}\) to denote a polyhedron which refers to the convex hull of a finite set of points in \(\mathbb{R}^3\) in convex position. Moreover, let \(\mathcal{P}^\circ\) denote the interior of \(\mathcal{P}\). Let \(P : \mathbb{R}^3 \rightarrow \mathbb{R}^2\) denote the projection which drops the \(x\) coordinate. Then Prince Rupert asked, do there exist \(R_1, R_2 \in \SO(3)\) and \(t \in \mathbb{R}^2\) such that
\[ P \circ R_1(\mathcal{P}_C) + t \subset P \circ R_2(\mathcal{P}_C^\circ), \]
where \(\mathcal{P}_C\) is the set of vertices of the cube. Note that the use of of the interior on the right side of the containment is important, as otherwise, we could trivially choose \(R_1 = R_2\). We say that a polyhedron \(\mathcal{P}\) has the \textit{Rupert property} (or is \textit{Rupert}) if it satisfies the above condition, and that the orientations \(R_1, R_2\) are a \textit{Rupert passage} for \(\mathcal{P}\). Moreover, one can measure ``how Rupert'' a polyhedron \(\mathcal{P}\) is by testing what the largest scaling \(s\mathcal{P}\) is, that can still pass through \(\mathcal{P}\). This notion is known as the \textit{Nieuwland number} or \textit{Nieuwland constant} (see \cite{fredriksson, steininger2025convex}) and is defined more precisely as the largest \(s \in \mathbb{R}\) such that there exists \(R_1, R_2 \in \SO(3)\) and \(t \in \mathbb{R}^2\) for which
\[ P \circ R_1(s\mathcal{P}) + t \subset P \circ R_2(\mathcal{P}^\circ) \]
holds. For instance, as discussed above, Nieuwland showed that the Nieuwland number of the cube is \(\sqrt{18}/{4}\). 
\begin{figure}[h!]
    \centering
    \includegraphics[scale=0.4]{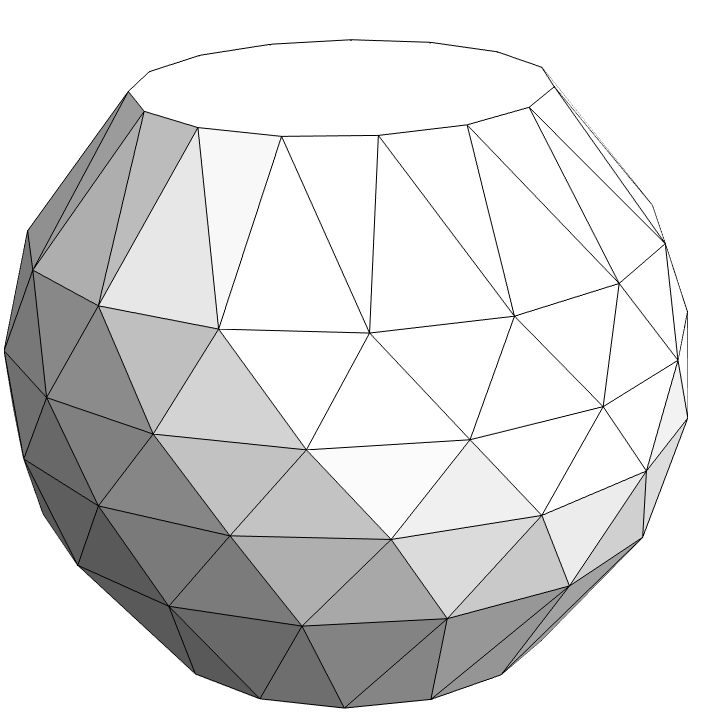}
    \caption{The Noperthedron of Steininger and Yurkevich.}
    \label{fig:nopert}
\end{figure}
\subsection{State of the Art}
Currently, it is known that all 5 Platonic solids, 11 out of 13 Catalan solids, and 87 of the 92 Johnson solids have the Rupert property. Note that proving that a polyhedron is Rupert is a far easier task than showing that it is not. For the former, one need only exhibit two projections and verify containment, while for the latter, it is not immediately clear how one might even begin. The fact that so many nice solids (as well as other polyhedra) have the Rupert property led to the following conjecture.
\begin{conj}[Jerrard-Wetzel-Yuan \cite{conjecture}]
    All convex polyhedra are Rupert.
\end{conj}
Jerrard, Wetzel, and Yuan \cite{conjecture} state their conjecture ``with a certain hesitancy''. This caution seems to have been appropriate, as recently, Steininger and Yurkevich \cite{steininger2025convex}, constructed a convex polyhedron that cannot pass through itself. 
\begin{thm}[Steininger-Yurkevich  \cite{steininger2025convex}]
    The Noperthedron is not Rupert.
\end{thm}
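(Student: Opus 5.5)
The plan is to reduce the Rupert condition to a compact parameter space and then cover that space by finitely many regions, each certified to contain no Rupert passage. Since the Noperthedron is centrally symmetric (it is built from a point set closed under $x\mapsto -x$), both projections $P\circ R_1(\mathcal{P})$ and $P\circ R_2(\mathcal{P})$ are centrally symmetric polygons. If some translate of the first lies in the interior of the second, then so does the centred copy (average the translate and its reflection using convexity). So we may take $t=0$. Next, a rotation $R$ followed by $P$ depends only on the viewing direction (a point of $S^2$, i.e. two angles $\theta,\varphi$) and an in-plane rotation $\alpha$. The in-plane rotation of the outer shadow can be absorbed into the inner one. This leaves five parameters $(\theta_1,\varphi_1,\theta_2,\varphi_2,\alpha)$ ranging over a box. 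The symmetry group of the Noperthedron (it is built with a large cyclic symmetry, e.g. $C_{30}$ together with $-I$) shrinks this box further, so it suffices to treat a fundamental domain.

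The key step is a local certificate. Fix a parameter point $q$ and a vertex $v$ of the inner solid. Suppose $P R_1(q) v$ lies strictly outside the outer shadow $P R_2(q)(\mathcal{P})$, with the violation witnessed by a separating edge normal, and suppose the signed amount of this violation is $\delta>0$. Because the maps $q\mapsto P R_i(q)v$ are Lipschitz with explicit constants (bounded by $\|v\|$ times sums of the step sizes in each angle), the violation persists on a box around $q$ of radius about $\delta/(L\|v\|)$. This gives the \emph{global} theorem: a neighbourhood of $q$ contains no passage. The configurations near a genuine candidate, where the inner shadow almost fits, need a finer \emph{local} theorem. There I would argue that if both shadows share nearly the same extremal vertices, then a small perturbation cannot push every inner vertex inside. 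Concretely, I would linearize the positions of a few vertices of the inner and outer polygons and show that some inner vertex is always pushed out or stays at least as far out, using second-order bounds on the rotation matrices. The main obstacle is these near-tight regions, including the identity region $R_1\approx R_2$, where $\delta\to 0$ and the naive Lipschitz cover fails. Here the local theorem must be used, exploiting that the polygons are congruent and the inner polygon cannot be strictly inside a congruent copy of itself: by an area or perimeter argument, a convex polygon cannot sit strictly inside a rigidly moved copy of itself.

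Finally I would run an adaptive subdivision of the five-dimensional box. On each cell I would try the global certificate at the centre; if it fails, I would try the local certificate; if that fails too, I would bisect. All inequalities would be verified in exact rational or interval arithmetic, with $\sin$ and $\cos$ replaced by rigorous Taylor bounds, so that the computer output constitutes a proof. Termination of this search, namely that the chosen Noperthedron has no cell where both certificates fail indefinitely, is the point that depends on the polyhedron being designed well (no near-passages). I would justify it empirically by the finite run rather than a priori.
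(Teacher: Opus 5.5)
The paper does not prove this theorem. It cites \cite{steininger2025convex} and records only three features of that proof: central symmetry removes $t$, the $2\pi/15$ rotational symmetry shrinks the parameter space, and a separate local statement excludes passages with $R_2$ close to $R_1$. Your outline has this architecture, and your reduction steps are fine: the averaging argument for $t=0$ is correct. (The symmetry is rotation by $2\pi/15$ together with central symmetry, not $C_{30}$ plus $-I$, but nothing depends on that.)

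\textbf{The gap is the local theorem.} It is the decisive ingredient, and the mechanism you give for it fails. An area or perimeter argument excludes a passage only at the exact points where the two shadows are congruent, namely the diagonal and its images under the symmetries, which is a measure-zero set. A certificate must hold on a whole box around such a point. On that box the two shadows are projections along different directions $x_1\neq x_2$, so they are not congruent. The shadow area $\tfrac12\sum_F\operatorname{area}(F)\,|\langle n_F,x\rangle|$ is not constant on any open set of directions. Hence every such box contains parameters where the inner shadow has strictly smaller area than the outer one, and the inequality you would need fails somewhere on the box. Your fallback (linearize a few vertices, add second-order bounds) only restates what must be proved: that no small relative rotation moves all extremal inner vertices inward at once. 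That is a property of the specific polyhedron, not a consequence of near-congruence. This is also why the paper, lacking central symmetry, has no such statement for $\mathcal{P}_{11/20}$.

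What is needed is an inequality valid uniformly on a box straddling the diagonal. It uses central symmetry beyond removing $t$, because it compares distances to the common centre. Here is one mechanism that works, of the kind used in \cite{steininger2025convex}.
\begin{itemize}
\item Let $x_1,x_2$ be the unit viewing directions, so that $\Vert M_iv\Vert^2=\Vert v\Vert^2-\langle v,x_i\rangle^2$.
\item Let $Q=LP$ for a symmetry $L$, and suppose the outer shadow near its vertex $M_2Q$ lies inside the disc of radius $\Vert M_2Q\Vert$.
\item If $R_\alpha M_1P$ lies strictly inside the outer shadow and near $M_2Q$, then $\Vert M_1P\Vert<\Vert M_2Q\Vert$. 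Equivalently, $\langle P,x_1\rangle^2>\langle P,y\rangle^2$, where $y=L^{-1}x_2$ is a unit vector.
\item Take three such vertices $P_1,P_2,P_3$ and flip signs so that $\langle P_i,x_1\rangle>0$; the squared inequality is unchanged. Then $\langle P_i,x_1-y\rangle>0$ for each $i$.
\item If $x_1+y=\sum_i\lambda_iP_i$ with $\lambda_i\ge 0$ not all zero, then $0=\langle x_1+y,x_1-y\rangle=\sum_i\lambda_i\langle P_i,x_1-y\rangle>0$, a contradiction.
\end{itemize}
Local maximality, closeness and the cone condition are open conditions, so they can be verified with explicit margins on a box containing diagonal points. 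The polyhedron must be designed so that suitable triples exist for every viewing direction. Without a statement of this type your subdivision cannot terminate near the diagonal, so the proposal does not yet prove the theorem.
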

Their proof is extremely computational and relies on a cleverly constructed polyhedron (shown in Figure \ref{fig:nopert}) with \(90\) vertices and \(2\pi / 15\) rotational symmetry, allowing them to reduce the size of their parameter space. In particular, their polyhedron is centrally symmetric, meaning that for every vertex \(v\) of the polyhedron, \(-v\) is also a vertex. This condition effectively allows the removal of \(t\) in the above containment condition. Intuitively, the existence of the Noperthedron as a counterexample would suggest that there should exist many other potential non-Rupert polyhedra. Steininger and Yurkevich made the following conjecture \cite{steininger2023algorithmic} which, to the best of our knowledge, remains open.
\begin{conj}[Steininger-Yurkevich]
    The rhombicosidodecahedron is not Rupert.
\end{conj}
 The rhombicosidodecahedron is an Archimedean solid with 62 faces (20 triangles, 30 squares, and 12 pentagons), 60 vertices, and 120 edges. Our main motivation was that there should surely exist simpler counterexamples, i.e. polyhedra that are not Rupert and have many fewer vertices and a simpler face structure, leading us to ask the following.
\begin{quote}
    \textbf{Question.} What is the `simplest' polyhedron that is not Rupert? 
\end{quote}
\vspace{-10pt}
\subsection{The Stellated Tetrahedron}
The Triakis Tetrahedron is a polyhedron with 8 vertices, 18 edges, and 12 faces. It can be obtained by 
starting with a regular tetrahedron and then adding an additional point for each of the 4 faces; this point has the property that its orthogonal projection onto face is exactly the centroid of the triangle (this process is known as `stellation'). We stellate each face of a regular tetrahedron such that the resulting polyhedron has constant dihedral angle, i.e. the angle between any two adjacent faces is constant. Fredriksson showed in \cite{fredriksson} that the Triakis Tetrahedron is Rupert with a lower bound for its Nieuwland number to be roughly \(1 + 4 \times 10^{-6}\). The techniques used by Fredriksson likely ensure that this is not far off from the true Nieuwland number. 
 This value is remarkably low which led us to consider similar polyhedra as potential non-Rupert candidates.
\vspace{-5pt}
\begin{figure}[h!]
    \centering
    \includegraphics[scale=0.43]{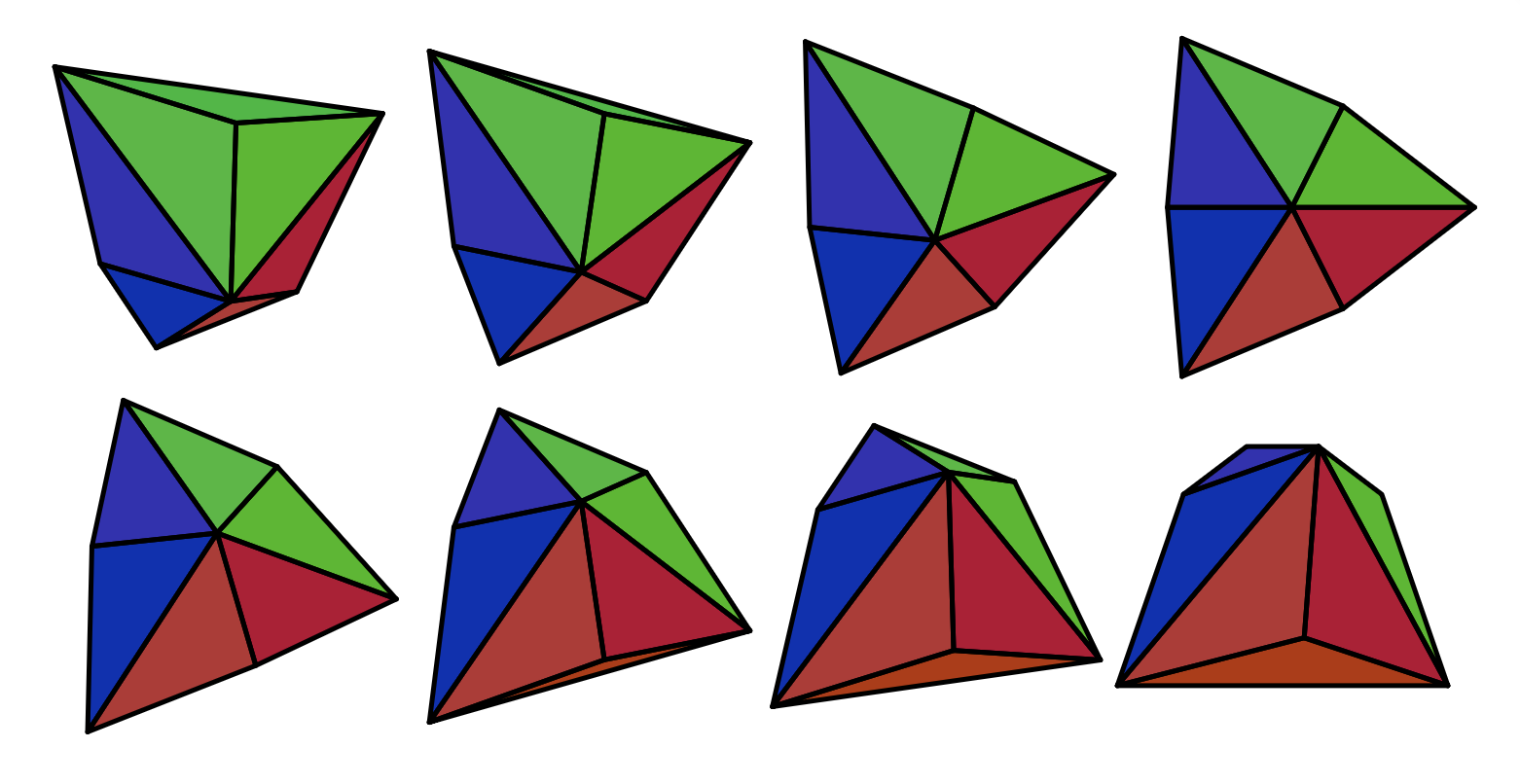}
    \caption{The stellated tetrahedron \(\mathcal{P}_{11/20}\). Faces with similar colors share the same base tetrahedral face prior to stellation. We conjecture this convex polyhedron to not be Rupert.} 
\end{figure}
\vspace{-5pt}

We consider the stellated tetrahedron \(\mathcal{P}_a\) with vertices
\begin{align*}
    p_0 &= (1, 1, 1), &q_0 &= (-a, -a, -a), \\
    p_1 &= (1, -1, -1), &q_1 &= (-a, a, a), \\
    p_2 &= (-1, 1, -1), &q_2 &= (a, -a, a), \\
    p_3 &= (-1, -1, 1), &q_3 &= (a, a, -a).
\end{align*}
Edges are between all \( p_i, p_j\) and \(p_i, q_j\) for \(i \ne j\), and \(a \in (0.5, 0.57)\). Note that \(a = 0.6\) corresponds to the Triakis tetrahedron. An algorithm implemented by Fredriksson \cite{fredriksson} is capable of quickly verifying the Rupert property for many polyhedra, including several that were previously not known to be Rupert. For stellated tetrahedra with \(a \in (0.5, 0.57)\), however, the code is unable to find a Rupert passage (see Figure \ref{fig:numerics}). Scott, in \cite{scott}, provides a sufficient condition for a polyhedron to be locally Rupert, which these stellated tetrahedra do not satisfy. We conjecture that these stellated tetrahedra are not Rupert. 
\begin{conj}
    The stellated tetrahedron \(\mathcal{P}_{11/20}\) is not Rupert.
\end{conj}
The choice of \(a = 11/20\) is, admittedly, somewhat arbitrary. It is a nice enough rational number in the interval \((0.5, 0.57)\) which is convenient to type as input into a computer program. In support of this conjecture, we provide numerical evidence that the majority of pairs of orientation do not yield a Rupert passage. 

\begin{figure}[h!]
    \centering
    \includegraphics[scale=0.3]{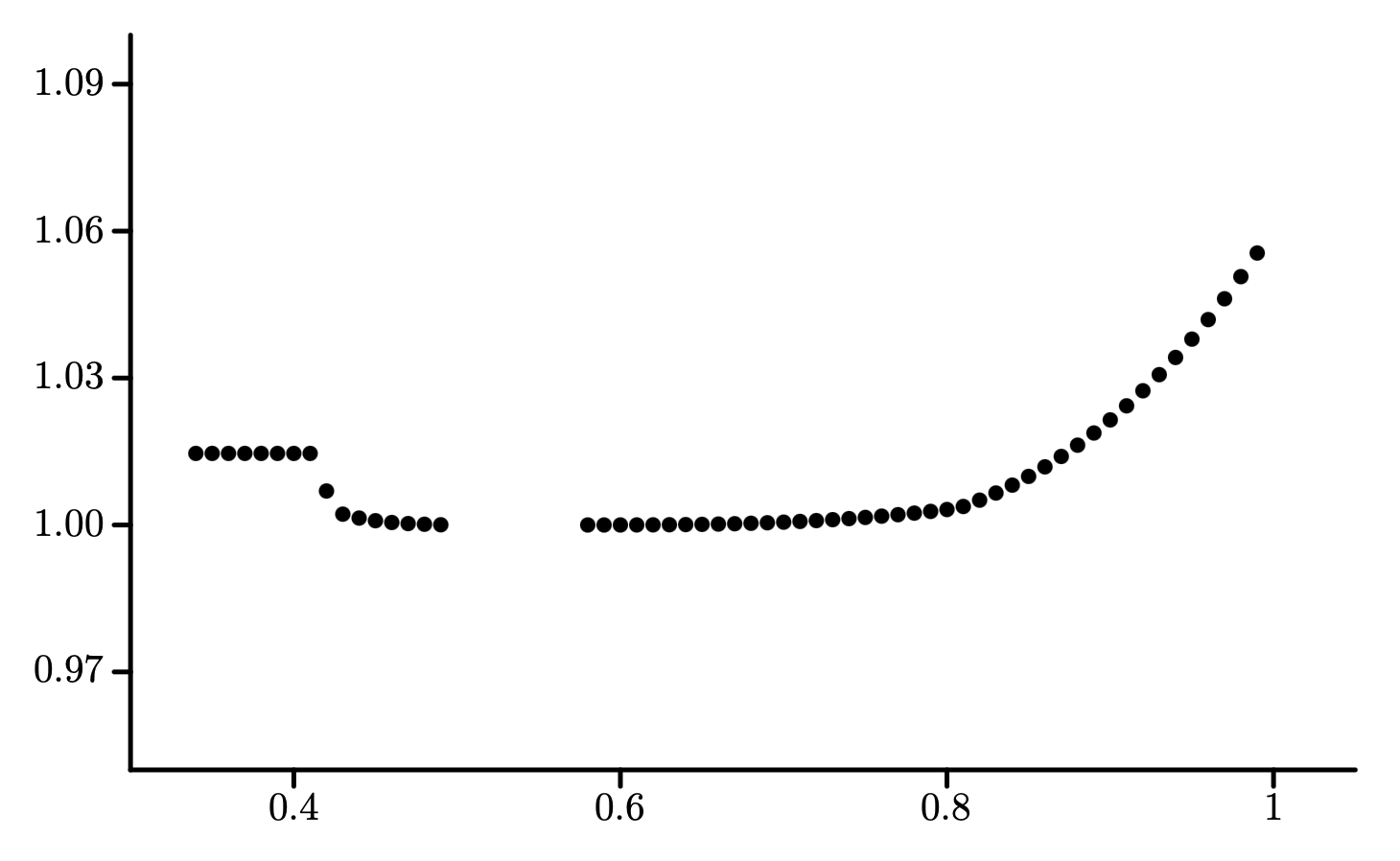}
    \caption{Numerical Nieuwland numbers for \(\mathcal{P}_a\) plotted against \(a\) computed using the techniques of \cite{fredriksson}.}
    \label{fig:numerics}
\end{figure}
\subsection{Main Result}

The space of all pairs of orientations, that is \(\SO(3) \times \SO(3)\), is a \(6\)-dimensional manifold. Since we can encode an element of \(\SO(3)\) using three angles \(\alpha, \theta, \phi\), we can define a parameter space \(\Lambda_0\) as a \(6\)-dimensional space, i.e. \([0, 2\pi]^6\) over which to search for Rupert passages. We discuss in \S 3 how \(\Lambda_0\) can be reduced to \(\Lambda = [0, 2\pi]^2 \times [0, \pi] \times [0, \pi / 2]^2\). If we use the product measure on \(\Lambda\), we can formulate the following statement.

\begin{thm}
    There is no Rupert passage for \(\mathcal{P}_{11/20}\) in \(88\%\) of \(\Lambda\).
\end{thm}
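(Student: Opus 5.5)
The statement is a certified computational claim, so the plan is to cover \(\Lambda\) by a grid of small boxes and certify, box by box, that no Rupert passage exists for any parameter in the box. The certified boxes should then make up at least \(88\%\) of the product measure of \(\Lambda\).

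Fix a box \(B = \prod_{k=1}^6 [c_k - h_k, c_k + h_k] \subset \Lambda\) with center \(\lambda_0 = (R_1^0, R_2^0)\). For the center, compute the two projected polygons \(Q_1 = P\circ R_1^0(\mathcal{P}_{11/20})\) and \(Q_2 = P \circ R_2^0(\mathcal{P}_{11/20})\). Write \(Q_2 = \{x : \langle n_j, x\rangle \le b_j\}\) as the convex hull of the projected vertices, described by its edge inequalities.

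The largest \(s\) for which some translate of \(sQ_1\) fits in \(Q_2\) is the value of a linear program in \((s,t)\):
\[ \max s \quad\text{subject to}\quad s\langle n_j, v_i\rangle + \langle n_j, t\rangle \le b_j \]
for all projected vertices \(v_i\) of \(Q_1\) and all edges \(j\) of \(Q_2\). This uses only 8 vertices against at most 8 edges, so it is cheap.

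Let \(s^*(\lambda_0)\) be the optimum. By LP duality, a dual feasible solution with objective value \(s^*<1\) certifies that \(Q_1\) has no translate inside \(Q_2\). The certificate is a set of nonnegative multipliers on the constraints, and it gives a positive margin \(1 - s^*\).

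To extend the conclusion from \(\lambda_0\) to all of \(B\), bound how far the projected vertices and the edge normals and offsets can move as the six angles vary within \(B\). Each projected coordinate is a trigonometric polynomial in the angles. Its partial derivatives are bounded by \(\max\|v\| = \sqrt3\) times products of \(\le 1\) factors, so each vertex moves by at most \(\sqrt3\sum_k h_k\) times a small explicit constant.

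Feed these perturbation bounds into the dual certificate. Take the same multipliers \(y_{ij}\) and evaluate them at the perturbed data. The dual objective changes by at most \(\sum y_{ij}\) times the maximal perturbation of the constraint coefficients. If that change is smaller than \(1-s^*\), the whole box is certified.

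Since \(\mathcal{P}_{11/20}\) is not centrally symmetric, the translation \(t\) cannot be eliminated, but it appears linearly. This is why the LP formulation suits this polyhedron: it handles \(t\) exactly rather than by sampling.

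Boxes that fail to certify are subdivided adaptively down to a minimum size. The measure of certified boxes is summed and divided by \(|\Lambda| = (2\pi)^2\pi(\pi/2)^2\). The reduction from \(\Lambda_0\) to \(\Lambda\) described in \S3, via the tetrahedral symmetry group and the irrelevance of in-plane rotations to the left-hand projection, is taken as given.

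Making the Lipschitz step rigorous needs some care. Both the edge set of \(Q_2\) and the vertex set of \(Q_1\) (which points are hull vertices) can change inside a box. To avoid this, I would use the constraints for all \(\binom{8}{2}\) candidate supporting lines rather than the hull edges. Each candidate line is kept only when it remains a valid supporting line throughout the box, which is checked by interval arithmetic on the signed distances of the remaining points. Redundant inequalities are harmless, so this is sound.

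The main obstacle I expect is near-degenerate regions. Near a Rupert-like configuration \(s^*\) approaches \(1\), and certification requires extremely small boxes, so computational cost blows up. This is presumably why only \(88\%\) rather than everything is claimed. I would accept leaving those regions uncertified and report the certified fraction, using floating-point LP solves whose dual certificates are rechecked with rational or interval arithmetic to guard against roundoff.
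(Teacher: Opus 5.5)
Your overall strategy matches the paper's: adaptive orthtree subdivision of \(\Lambda\), an LP in \((s,t)\) for the largest translated scaling at each box center, Lipschitz control of the projected vertices over the box, and summing the certified volume. One correction to the setup: after the normalization \(\alpha'=0\), which removes the in-plane rotation of the \emph{containing} polygon, \(\Lambda\) has five coordinates, not six as in your box \(B\).

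There is a genuine, fixable gap in the step that transfers the dual certificate to the whole box. At the center the multipliers satisfy \(\sum_{i,j}y_{ij}\langle n_j,v_i\rangle=1\) and \(\sum_{i,j}y_{ij}n_j=0\). At a perturbed parameter these equalities break, so \(y\) is no longer dual feasible, and bounding the change of \(\sum_{i,j}y_{ij}b_j\) certifies nothing on its own. Aggregating the perturbed constraints with the same \(y\) gives only
\[
s\,(1+e_1)+\langle e_2,t\rangle\le s^*+e_3,
\]
where \(e_1=\sum y_{ij}(\langle n_j',v_i'\rangle-\langle n_j,v_i\rangle)\), \(e_2=\sum y_{ij}n_j'\) and \(e_3=\sum y_{ij}(b_j'-b_j)\). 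The residual \(e_2\) multiplies the translation, which the LP does not bound. The repair is short but necessary. Take an actual passage \(sQ_1'+t\subset Q_2'\) at the perturbed parameter with \(s\ge 1\). The origin is interior to \(\mathcal{P}_{11/20}\), so \(0\in Q_1'\), hence \(t\in Q_2'\subset B(0,\sqrt3)\). The box is then certified once \(1-s^*>|e_1|+|e_3|+\sqrt3\,\|e_2\|\), where each \(e_k\) is at most \(\sum y_{ij}\) times the maximal perturbation of the corresponding coefficients.

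The paper avoids this bookkeeping by robustifying the polygons rather than the certificate. It shrinks \(P\) and enlarges \(Q\) by buffers of radii \(\sqrt5\delta\) and \(\sqrt2\delta\), then re-solves the LP (Lemma 1 and the Corollary). The inward \(r\)-buffer of a convex hull lies in, and the outward \(r\)-buffer contains, the hull of every \(r\)-perturbation of the point set. So that route treats \(t\) exactly and does not care which points are hull vertices, and your \(\binom{8}{2}\) candidate-line device with interval checks is not needed there. Your repaired route has its own advantages: one LP solve per box instead of one per trial \(\delta\), an explicit margin that says directly how large a certifiable box can be, and suitability for exact rechecking.

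Two side remarks. First, your displacement bound correctly carries the factor \(\max\|v\|=\sqrt3\); the radii in the paper's Check, as written, omit it, since the Steininger--Yurkevich bounds are operator-norm bounds. Second, checking only \(Q_1\) inside \(Q_2\) suffices, because a passage at a point of \(\Lambda\) is defined in that one direction, even though the paper's Check also tests the reverse containment.
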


Additional computational efforts may suffice to do increase the number and establish the result for $(100 - \varepsilon) \%$ for any $\varepsilon > 0$. However, we lack a satisfactory local theory ensuring that `infinitesimal rotations' cannot create a valid Rupert passage, i.e. we cannot check that \(\mathcal{P}_{11/20}\) is not locally Rupert. It is not clear to us whether \(\mathcal{P}_{11/20}\), a polyhedron on 8 vertices, is the simplest example of a convex polyhedron that is not Rupert (if indeed it is not Rupert); however, it is a reasonable candidate.
 
\section{Lemmas}

Note that throughout this section, we refer to \(\mathcal{P}_a\) instead of \(\mathcal{P}_{11/20}\) as the statements and results are not specific to a particular value of \(a\).

\subsection{The Polygon Setting}

Given two orientations \(R_1, R_2\) of \(\mathcal{P}_a\), we would like to check whether \(R_1 \mathcal{P}_a\) can ``fit inside'' \(R_2 \mathcal{P}_a\). Moreover, if \(R_1 \mathcal{P}_a\) ``cannot fit inside'' \(R_2 \mathcal{P}_a\), we would hope to say that \(R'_1 \mathcal{P}_a\) also ``cannot fit inside'' \(R'_2 \mathcal{P}_a\) for orientations \(R'_1, R'_2\) that are ``close'' to \(R_1, R_2\) respectively. We now establish the necessary machinery to make all of this precise.
Let \(P, Q\) be convex polygons in \(\mathbb{R}^2\) with vertices \(\{p_i\}_{i \in [n]}\), \(\{q_j\}_{j \in [m]}\) (ordered counterclockwise) respectively. Like in the polyhedron setting, when we say polygon, we refer to the convex hull of a finite set of points in convex position in the plane. Let \(\{n_i\}\) denote the outward facing normals of \(Q\), i.e. 
\[ n_i = R_{\pi / 2}(q_i - q_{i + 1}). \]
Then \(x \in Q\) if and only if \(\langle n_i, x \rangle \le b_i\) for all \(i\), where \(b_i = \langle n_i, q_i\rangle\). Let \(s \in \mathbb{R}\) and \(t \in \mathbb{R}^2\), and consider the set of inequalities
\begin{equation}
\langle n_j, sp_i + t \rangle \le b_j, 
\end{equation}
for all \(i \in [n], j \in [m]\). Geometrically, values of \(s, t\) that satisfy this system of inequalities correspond to a scaling of \(P\) that can be translated to fit inside \(Q\), i.e. 
\begin{equation}
sP + t \subset Q^\circ. 
\label{eq:fits}
\end{equation}
We say that \textit{\(P\) fits inside \(Q\)} or \textit{\(Q\) contains \(P\)} if there exists \(t \in \mathbb{R}^2\) and \(s > 1\) satisfying \ref{eq:fits}. If \(s^*\) is the largest possible value of \(s\), we say that \textit{\(P\) fits inside \(Q\) up to scale \(s^*\)}. Note that this relation is transitive, i.e. for polygons \(A, B, C\) if \(A\) fits inside \(B\) and \(B\) fits inside \(C\), then \(A\) fits inside \(C\).
We remark that others use similar language to allow rotating \(P\) at the price of a more complicated verification \cite{chazelle}. In our setting, we drop this rotation in favor of a simpler condition due to the relative simplicity of our polyhedron.

\begin{figure}[h!]
    \centering
    \begin{subfigure}[c]{0.45\textwidth}
        \centering
        \begin{tikzpicture}
            \draw (1, 1) -- (-1, 1) -- (-1, -1) -- (1, -1) -- (1, 1);
            \draw (-4.4, 0) -- (-3, 1.4) -- (-1.6, 0) -- (-3, -1.4) -- (-4.4, 0);
            \node at (-3, 0) {\(P\)};
            \node[above] at (-1, 1) {\(Q\)};
            \draw[dashed] (-1, 0) -- (0, -1) -- (1, 0) -- (0, 1) -- (-1, 0);
            \node at (0, 0) {\(\frac{5}{7}P\)};
        \end{tikzpicture}
    \end{subfigure}
    \begin{subfigure}[c]{0.45\textwidth}
        \centering
        \begin{tikzpicture}
            \draw (1, 1) -- (-1, 1) -- (-1, -1) -- (1, -1) -- (1, 1);
            \draw[white] (-4.4, 0) -- (-3, 1.4) -- (-1.6, 0) -- (-3, -1.4) -- (-4.4, 0);
            \draw (-3.2, 0) -- (-2.5, 0.7) -- (-1.8, 0) -- (-2.5, -0.7) -- (-3.2, 0);
            \node at (-2.5, 0) {\(P'\)};
            \node[above] at (-1, 1) {\(Q\)};
            \draw[dashed] (-1, 0) -- (0, -1) -- (1, 0) -- (0, 1) -- (-1, 0);
            \node at (0, 0) {\(\frac{10}{7}P\)};
        \end{tikzpicture}
    \end{subfigure}
    \caption{\(P\) does not fit inside \(Q\), but \(P'\) does fit inside \(Q\).}
\end{figure}
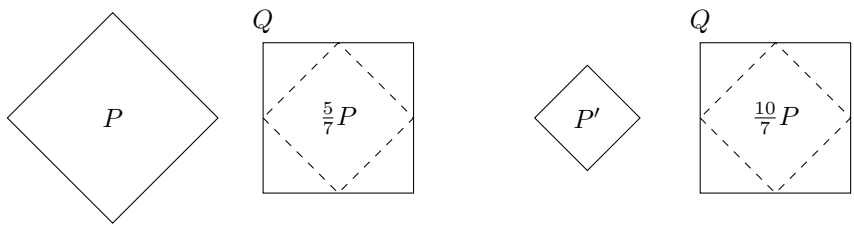

For a convex polygons \(P\), we say that a \(\delta\)-perturbation of \(P\) is any polygon \(P'\) such that \(\Vert p_i - p'_i\Vert < \delta\) for all \(i\). Then we have the following lemma.

\begin{lemma}
    For convex polygons \(P, Q\), let \(P'\) be a polygon that is contained by all \(\delta_1\)-perturbations of \(P\) and \(Q'\) be a polygon that contains all \(\delta_2\)-perturbations of \(Q\). Let \(s^*\) be the optimal solution to the linear program with objective function \(s\) and constraints
    \[ \langle n_j, sp'_i + t \rangle \le \langle n_j, q'_j\rangle, \]
    for all \(i \in [n], j \in [m]\). If \(s^* < 1\), then no \(\delta_1\)-perturbation of \(P\) can fit inside any \(\delta_2\)-perturbation of \(Q\).
\end{lemma}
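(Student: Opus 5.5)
The plan is a proof by contraposition that chains containments through $P'$ and $Q'$.

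Suppose some $\delta_1$-perturbation $\tilde P$ of $P$ fits inside some $\delta_2$-perturbation $\tilde Q$ of $Q$. By the definition of fitting, there are $s>1$ and $t\in\mathbb{R}^2$ with $s\tilde P + t \subset \tilde Q^\circ$. We will build from this a feasible point $(s,t)$ of the linear program in the statement with $s>1$. That contradicts $s^*<1$.

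\textbf{Step 1 (transitivity).} By hypothesis $P'\subset \tilde P$, because $P'$ is contained in every $\delta_1$-perturbation of $P$. Also $\tilde Q\subset Q'$, because $Q'$ contains every $\delta_2$-perturbation of $Q$. Scaling and translating preserve containment, so
\[ sP'+t \subset s\tilde P + t \subset \tilde Q^\circ \subset Q'. \]
This is the transitivity remark made before the lemma, with $P'\subset\tilde P$ and $\tilde Q\subset Q'$ playing the roles of (scale-one) fittings.

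\textbf{Step 2 (translate to the LP).} Each vertex $p'_i$ of $P'$ lies in $P'$, so $sp'_i+t\in Q'$. Write $Q'$ through its outward normals and offsets $\langle n_j, q'_j\rangle$, as in the description of a polygon by half-planes given earlier. Then membership in $Q'$ says exactly that $\langle n_j, sp'_i+t\rangle \le \langle n_j,q'_j\rangle$ for all $i,j$. Hence $(s,t)$ is feasible for the LP, so $s^*\ge s>1$, a contradiction.

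\textbf{Main obstacle: notation.} The step I expect to need care is reading the LP correctly. The $n_j$ appearing in the constraints must be the outward normals of $Q'$, not of $Q$. That is, $n_j = R_{\pi/2}(q'_j-q'_{j+1})$, and $i$, $j$ range over the vertices of $P'$ and $Q'$. I would state this interpretation explicitly. Only the vertices of $P'$ need checking, since $Q'$ is convex, so $sP'+t\subset Q'$ is equivalent to the vertex inequalities.

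\textbf{Boundary versus interior.} With these conventions the argument does not depend on whether fitting is defined with $Q^\circ$ or $Q$. The LP uses closed inequalities, and closed containment is weaker than containment in the interior. So any genuine fitting yields a feasible LP point with $s>1$.
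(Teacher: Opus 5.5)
Your argument is correct and follows the same route as the paper. Both chain $P' \subset \tilde P$, the assumed fitting $s\tilde P + t \subset \tilde Q^\circ$, and $\tilde Q \subset Q'$ to contradict $s^* < 1$. You also spell out two things the paper asserts without comment: why this chain gives an LP-feasible point with $s > 1$, and that the $n_j$ must be the outward normals of $Q'$.
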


\begin{proof}
    If \(s^* < 1\), then \(P'\) does not fit inside \(Q'\). Let \(\widetilde{P}\) be a \(\delta_1\)-perturbation of \(P\) and \(\widetilde{Q}\) be a \(\delta_2\)-perturbation of \(Q\). Then since \(P'\) fits inside \(\widetilde{P}\) and \(\widetilde{Q}\) fits inside \(Q'\), we immediately have that \(\widetilde{P}\) cannot fit inside \(\widetilde{Q}\).
\end{proof}

\subsection{The Polyhedron Setting}

We encode our orientations in the following way. Let \(M_{\theta, \phi}\) be the rotation matrix given by
\begin{align*}
M_{\theta, \phi} &= \begin{bmatrix}
    -\sin\theta & \cos\theta & 0 \\
    -\cos\theta \cos\phi & -\sin\theta \cos\phi & \sin\phi
\end{bmatrix} \\
& = \begin{bmatrix}
    0 & 1 & 0 \\
    0 & 0 & 1
\end{bmatrix} \begin{bmatrix}
    \sin\phi & 0 & \cos\phi \\
    0 & 1 & 0 \\
    -\cos\phi & 0 & \sin\phi
\end{bmatrix} \begin{bmatrix}
    \cos\theta & \sin\theta & 0 \\
    -\sin\theta & \cos\theta & 0 \\
    0 & 0 & 1
\end{bmatrix}, 
\end{align*}
and let \(R_\alpha\) denote counterclockwise rotation by \(\alpha\) in the plane. Then a polyhedron \(\mathcal{P}\) is Rupert if and only if there exist angles \(\alpha, \theta, \phi, \alpha', \theta', \phi'\) such that \(P = R_{\alpha} \circ M_{\theta, \phi} \mathcal{P}_a\) fits inside \(Q = R_{\alpha'} \circ M_{\theta', \phi'}\mathcal{P}_a\). Notice that we can always assume \(\alpha' = 0\), since we can rotate \(P\) by \(-\alpha'\). This allows us to encode \(\SO(3) \times \SO(3)\) as a 5-dimensional rectangular prism, i.e. \([0, 2\pi]^5\). If we have that \(P = R_{\alpha} \circ M_{\theta, \phi} \mathcal{P}_a\) fits inside \(Q = M_{\theta', \phi'} \mathcal{P}_a\), we say that \((\alpha, \theta, \phi, \theta', \phi')\) is a \textit{Rupert passage}. 

\begin{lemma}[Steininger-Yurkevich]
    For \(\varepsilon > 0\), if \(|\alpha - \alpha'|, |\theta - \theta'|, |\phi - \phi'| \le \varepsilon\), then
    \[ \Vert M_{\theta, \phi} - M_{\theta', \phi'} \Vert \le \sqrt{2} \varepsilon, \quad \text{and} \quad \Vert R_\alpha M_{\theta, \phi} - R_{\alpha'}M_{\theta', \phi'} \Vert \le \sqrt{5}\varepsilon. \]
\end{lemma}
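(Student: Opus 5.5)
We bound the difference one angle at a time using the triangle inequality, and control each single-angle change by the fact that each factor is Lipschitz with constant $1$ in its angle. The norm is the operator (spectral) norm; it is submultiplicative, and rotations and the coordinate projection $\begin{bmatrix}0&1&0\\0&0&1\end{bmatrix}$ have norm at most $1$.

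\emph{Step 1: single-angle bounds.} For a planar rotation by angle $\beta$, embedded in three dimensions as $R_z(\beta)$ or $R_y(\beta)$, we have
\[ R(\beta)-R(\beta') = R(\beta')\bigl(R(\beta-\beta')-I\bigr). \]
The matrix $R(\gamma)-I$ restricted to the plane of rotation is $2\sin(\gamma/2)$ times a rotation, and it vanishes on the axis. Hence $\Vert R(\beta)-R(\beta')\Vert = 2|\sin((\beta-\beta')/2)| \le |\beta-\beta'|$. The middle factor of $M_{\theta,\phi}$ is a rotation about the $y$-axis: it has rotation block $\begin{bmatrix}\sin\phi&\cos\phi\\-\cos\phi&\sin\phi\end{bmatrix}$, which is a rotation by $\phi-\pi/2$. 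So it too is $1$-Lipschitz in $\phi$.

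\emph{Step 2: bound for $M$.} Write
\[ M_{\theta,\phi}-M_{\theta',\phi'} = \bigl(M_{\theta,\phi}-M_{\theta',\phi}\bigr) + \bigl(M_{\theta',\phi}-M_{\theta',\phi'}\bigr). \]
In each bracket only one factor differs, and the other factors have norm at most $1$. This gives the crude bound $2\varepsilon$. To get the sharper constant $\sqrt2\,\varepsilon$, we instead compute directly. The $\theta$-dependence enters only through the first factor on the right, so for any unit $x$ we have $(M_{\theta,\phi}-M_{\theta',\phi'})x = A_1 x + A_2 x$. The term $A_1$ comes from the $\theta$-change. The term $A_2$ comes from the $\phi$-change and is applied to a vector already rotated in the $xy$-plane. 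We then argue that the two contributions are essentially orthogonal, or else bound $\Vert A_1x\Vert^2 + \Vert A_2x\Vert^2$ plus a cross term that is nonpositive.

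If this orthogonality is awkward, the fallback is the Frobenius bound. Each entry of $M_{\theta,\phi}-M_{\theta',\phi'}$ is a difference of products of sines and cosines. Expanding $\sum_{ij}(\Delta M_{ij})^2$ with trigonometric identities should give $2(1-\cos\Delta\theta) + 2(1-\cos\Delta\phi) + (\text{terms} \le 0)$. This is at most $\Delta\theta^2 + \Delta\phi^2 \le 2\varepsilon^2$. Since the operator norm is at most the Frobenius norm, this yields $\sqrt2\,\varepsilon$. Concretely, row one contributes $2-2\cos\Delta\theta$. Row two contributes $\cos^2\phi + \cos^2\phi' - 2\cos\phi\cos\phi'\cos\Delta\theta + (\sin\phi-\sin\phi')^2$, which we bound by $2-2\cos\Delta\phi + 2\cos\phi\cos\phi'(1-\cos\Delta\theta)$. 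The total would then be at most $4(1-\cos\Delta\theta) + 2(1-\cos\Delta\phi)$, which is a little too big. So the cross term needs care. This bookkeeping is the main obstacle: getting $\sqrt2$ rather than $\sqrt3$ or $2$. I would first try the operator-norm computation via orthogonal decomposition, exploiting that $M$ has orthonormal rows.

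\emph{Step 3: bound with $\alpha$.} Write
\[ R_\alpha M_{\theta,\phi} - R_{\alpha'}M_{\theta',\phi'} = (R_\alpha - R_{\alpha'})M_{\theta,\phi} + R_{\alpha'}(M_{\theta,\phi}-M_{\theta',\phi'}). \]
The first term has norm at most $\varepsilon$ and the second at most $\sqrt2\,\varepsilon$. The crude sum is $(1+\sqrt2)\varepsilon$, not $\sqrt5\,\varepsilon$. The same Frobenius-type computation as in Step 2, now over three angles, should give a sum of squared contributions of at most $\varepsilon^2 + 4\varepsilon^2$ or similar, hence $\sqrt5\,\varepsilon$. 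This is consistent with a Frobenius bound in which the $\theta$-term carries weight $2$, which suggests the intended route is Frobenius throughout.
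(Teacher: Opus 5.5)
There is a genuine gap. What you actually prove is $\Vert M_{\theta,\phi}-M_{\theta',\phi'}\Vert\le 2\varepsilon$ and $\Vert R_\alpha M_{\theta,\phi}-R_{\alpha'}M_{\theta',\phi'}\Vert\le 3\varepsilon$; your $(1+\sqrt2)\varepsilon$ already presupposes Step 2. The sharp constants $\sqrt2$ and $\sqrt5$, which are the whole content of the lemma, are left open. (The paper gives no proof of its own; it quotes the lemma from Steininger--Yurkevich.)

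Moreover, the Frobenius fallback cannot work for either bound. Your row-two expansion is an identity, not an estimate, so
\[ \Vert M_{\theta,\phi}-M_{\theta',\phi'}\Vert_F^2 = 2(1-\cos\Delta\theta)(1+\cos\phi\cos\phi') + 2(1-\cos\Delta\phi). \]
For $\Delta\theta=\varepsilon$, $\phi=\varepsilon/2$, $\phi'=-\varepsilon/2$ this equals $2(1-\cos\varepsilon)(2+\cos^2(\varepsilon/2))\approx 3\varepsilon^2$. No handling of cross terms will bring it down to $2\varepsilon^2$.

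Similarly, at $\phi=\phi'=0$ one has $R_\alpha M_{\theta,0}=[\,R_{\alpha-\theta-\pi/2}\mid 0\,]$. Taking $\alpha-\alpha'=\theta'-\theta=\varepsilon$ gives Frobenius norm $2\sqrt2\sin\varepsilon>\sqrt5\,\varepsilon$ for small $\varepsilon$. Your heuristic that ``the $\theta$-term carries weight $2$'' is misleading. The $\alpha$- and $\theta$-changes are both in-plane rotations and add coherently, while the $\phi$-change acts in an orthogonal direction; only the operator norm sees this.

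Your Step 2 splitting has a separate problem. The cross term $\langle A_1x,A_2x\rangle$ is not sign-definite: to first order near $\phi=0$ it is a multiple of $\Delta\theta\,\Delta\phi\,y_2y_3$. So ``nonpositive cross term'' also fails without an explicit computation.

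The missing idea is to compute the operator norm of the derivative, using that $M$ has orthonormal rows. Put $(c,a,b)=(\alpha-\alpha',\theta-\theta',\phi-\phi')$. Let $F(t)=R_{\alpha_t}M_{\theta_t,\phi_t}$ along the straight segment from $(\alpha',\theta',\phi')$ to $(\alpha,\theta,\phi)$. Write $J=R_{\pi/2}$ and $D=a\,\partial_\theta M+b\,\partial_\phi M$, both evaluated at $\phi=\phi_t$. Then $F'(t)=R_{\alpha_t}(cJM+D)$, and a direct computation gives
\[ DD^T=\begin{bmatrix} a^2 & -ab\sin\phi\\ -ab\sin\phi & a^2\cos^2\phi+b^2\end{bmatrix},\qquad MD^T=a\cos\phi\,J. \]
So $DD^T$ has eigenvalues $a^2+b^2$ and $a^2\cos^2\phi$. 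Also $(cJM+D)(cJM+D)^T=(c^2-2ac\cos\phi)I+DD^T$, whose largest eigenvalue is $c^2-2ac\cos\phi+a^2+b^2\le(|a|+|c|)^2+b^2$.

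Hence $\Vert F'(t)\Vert\le\sqrt{(|a|+|c|)^2+b^2}\le\sqrt5\,\varepsilon$, and $\Vert D\Vert=\sqrt{a^2+b^2}\le\sqrt2\,\varepsilon$. Integrating, $\Vert F(1)-F(0)\Vert\le\sup_t\Vert F'(t)\Vert$, and the same argument for $t\mapsto M_{\theta_t,\phi_t}$ gives both inequalities. The configuration above, with $\Delta\phi=\varepsilon$ added, shows that $\sqrt5$ is sharp to first order.
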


For \(P = R_\alpha \circ M_{\theta, \phi} \mathcal{P}_a\), we say that \(P' = R_{\alpha'} \circ M_{\theta', \phi'}\) is an \(\varepsilon\)-angle perturbation of \(P\) if \(|\alpha - \alpha'|, |\theta - \theta'|, |\phi - \phi'| \le \varepsilon\). Then with these two lemmas, we obtain as a corollary the following.

\begin{figure}[h!]
    \centering
    \includegraphics[scale=0.4]{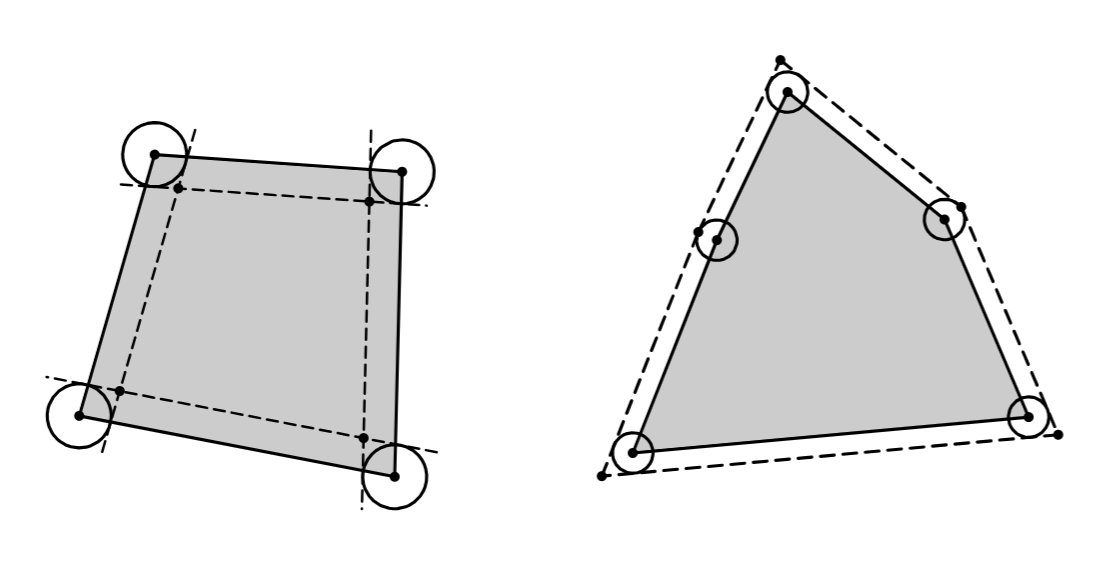}
    \caption{The construction of polygons \(P'\) (left) and \(Q'\) (right) as in the Corollary.}
    \label{fig:buffer}
\end{figure}

\begin{corollary}
    Let \(P = R_\alpha \circ M_{\theta, \phi} \mathcal{P}_a\) and \(Q= M_{\theta', \phi'} \mathcal{P}_a\). Let \(P'\) be a polygon that is contained by all \(\varepsilon\)-angle perturbations of \(P\) and \(Q'\) be a polygon that is contained by all \(\varepsilon\)-angle perturbations of \(Q\) and suppose \(P'\) fits inside \(Q'\) up to scale \(s\). If \(s < 1\), then there is no Rupert passage for any \(\varepsilon\)-angle perturbations of \(P, Q\).
    \label{cor:check}
\end{corollary}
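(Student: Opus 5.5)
The plan is to reduce the statement to Lemma 1, and in fact to argue directly from the set containments, which is all that Lemma 1's proof uses. One wording issue comes first. For the argument to work, $Q'$ must \emph{contain} every $\varepsilon$-angle perturbation of $Q$, just as in Lemma 1 (and as Figure \ref{fig:buffer} suggests, with $Q'$ an outward buffer of $Q$). So I read the hypothesis on $Q'$ as ``$Q'$ contains all $\varepsilon$-angle perturbations of $Q$'', while $P'$ is contained in every $\varepsilon$-angle perturbation of $P$.

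First I fix an arbitrary $\varepsilon$-angle perturbation $\widetilde P = R_{\tilde\alpha}\circ M_{\tilde\theta,\tilde\phi}\mathcal{P}_a$ of $P$ and an arbitrary $\widetilde Q = M_{\tilde\theta',\tilde\phi'}\mathcal{P}_a$ of $Q$. I suppose, for contradiction, that $\widetilde P$ fits inside $\widetilde Q$: there are $\tilde s>1$ and $t$ with $\tilde s\widetilde P + t \subset \widetilde Q^\circ$.

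A Rupert passage only gives $\widetilde P + t\subset \widetilde Q^\circ$ with scale exactly $1$, so I first upgrade this. Since $\widetilde P + t$ is compact and $\widetilde Q^\circ$ is open, their distance is some $\eta>0$. For $s>1$ close enough to $1$ we have $\Vert (s-1)x\Vert<\eta$ for all $x\in\widetilde P$, because $\widetilde P$ is bounded. Then
\[ s\widetilde P+t\subset\widetilde Q^\circ \quad\text{for some } s>1, \]
which is the notion of ``fits inside'' used in Lemma 1.

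Next comes the chain of containments. Because $P'\subset\widetilde P$, we get $\tilde s P' + t\subset \tilde s\widetilde P + t$, since scaling about the origin and translating preserve inclusion. Combining,
\[ \tilde sP'+t \subset \tilde s\widetilde P + t\subset\widetilde Q^\circ\subset Q', \]
where the last step uses $\widetilde Q\subset Q'$. In particular every vertex $\tilde s p'_i + t$ satisfies all the half-plane inequalities defining $Q'$. So $(\tilde s,t)$ is feasible for the linear program of Lemma 1 built from $P'$ and $Q'$, and hence the optimum satisfies $s\ge\tilde s>1$. This contradicts $s<1$. As $\widetilde P,\widetilde Q$ were arbitrary, no $\varepsilon$-angle perturbation pair is a Rupert passage.

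The main subtlety is not the logic but making the hypotheses usable, and ensuring the LP's half-planes really describe $Q'$. The containment chain above only uses that $Q'$ is the convex polygon cut out by those half-planes. In practice one constructs $P'$ and $Q'$ via Lemma 2 (Steininger--Yurkevich). An $\varepsilon$-angle perturbation moves each projected vertex $v$ by at most $\sqrt5\,\varepsilon\Vert v\Vert$ for $P$, and by at most $\sqrt2\,\varepsilon\Vert v\Vert$ for $Q$, where $\alpha'=0$. So it is a $\delta$-perturbation with $\delta=\sqrt5\,\varepsilon\max\Vert v\Vert$ (respectively $\sqrt2\,\varepsilon\max\Vert v\Vert$). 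One then takes $P'$ to be an inward offset of $P$ and $Q'$ an outward offset of $Q$ by these radii, exactly as in Lemma 1.
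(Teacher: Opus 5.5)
Your proof is correct and is essentially the paper's argument. The corollary comes from running the containment chain in the proof of Lemma 1 with $\varepsilon$-angle perturbations in place of $\delta$-perturbations, with Lemma 2 serving only to build admissible $P'$ and $Q'$ (the buffered polygons $P_-$, $Q_+$ in \texttt{Check}). You are also right that the hypothesis on $Q'$ must be read as $Q'$ \emph{containing} every $\varepsilon$-angle perturbation of $Q$, as in Lemma 1, since with the literal ``contained by'' the hypotheses would not support the conclusion.
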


So long as we have a way of constructing such \(P'\) and \(Q'\), this allows us a way of verifying that there is no Rupert passage within a full \(\varepsilon\) side-length cube in our 5-dimensional parameter space. One can do so using tangent lines to circles whose radii are the perturbation parameter, as in Figure \ref{fig:buffer}. 

\subsection{Symmetries}

In practice, \([0, 2\pi]^5\) is rather large, since the cubes we end up computing have small side length, in particular order \(10^{-1}\) or smaller. Because the stellated tetrahedron exhibits tetrahedral symmetry, there are a number of symmetries that can reduce our parameter space and computation time. 

\begin{lemma}
    \(M_{\theta + \pi / 2,\phi} \mathcal{P}_a = -M_{\theta, \phi}\mathcal{P}_a = R_\pi \circ M_{\theta, \phi}\mathcal{P}_a\).
    \label{lem:theta}
\end{lemma}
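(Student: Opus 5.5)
The plan is to reduce the identity to a single linear symmetry of the vertex set of \(\mathcal{P}_a\). First, the second equality is immediate: \(R_\pi\) is rotation by \(\pi\) in the plane, which is the map \(x \mapsto -x\). So it suffices to prove \(M_{\theta+\pi/2,\phi}\mathcal{P}_a = -M_{\theta,\phi}\mathcal{P}_a\). As elsewhere in the paper, \(\mathcal{P}_a\) may be read as its vertex set or as its convex hull. Everything below is linear, so the statement for vertices implies the statement for hulls.

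Next, I use the factorization of \(M_{\theta,\phi}\) displayed earlier. Its rightmost factor is the rotation \(Z_\theta\) about the \(z\)-axis, and the other factors do not depend on \(\theta\). Since \(Z_{\theta+\pi/2} = Z_\theta Z_{\pi/2}\), we get
\[ M_{\theta+\pi/2,\phi} = M_{\theta,\phi} Z_{\pi/2}, \qquad Z_{\pi/2}(x,y,z) = (y,-x,z). \]
This can also be checked directly against the explicit rows: \(M_{\theta+\pi/2,\phi}\) has rows \((-\cos\theta,-\sin\theta,0)\) and \((\sin\theta\cos\phi,-\cos\theta\cos\phi,\sin\phi)\). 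Writing \(Z_{\pi/2} = -S\) with \(S = -Z_{\pi/2}\), we obtain
\[ M_{\theta+\pi/2,\phi}\, v = -M_{\theta,\phi}(S v) \quad \text{for every vertex } v. \]
Hence it remains to show that \(S\) permutes the vertices of \(\mathcal{P}_a\).

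The map is \(S(x,y,z) = (-y,x,-z)\), and I check it on the \(p_i\):
\[ p_0=(1,1,1)\mapsto(-1,1,-1)=p_2,\quad p_2\mapsto(-1,-1,1)=p_3,\quad p_3\mapsto(1,-1,-1)=p_1,\quad p_1\mapsto(1,1,1)=p_0. \]
Since \(q_i = -a\,p_i\) and \(S\) is linear, \(S(q_i) = -a\,S(p_i)\). So \(S\) sends each \(q_i\) to the \(q_j\) with the same index as the image of \(p_i\). Thus \(S\) is a bijection of the vertex set, and it is in fact an improper tetrahedral symmetry. Therefore \(M_{\theta+\pi/2,\phi}\mathcal{P}_a = -M_{\theta,\phi}(S\mathcal{P}_a) = -M_{\theta,\phi}\mathcal{P}_a\).

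The only real obstacle is bookkeeping. One must pin down the sign convention of \(Z_\theta\) in the factorization, which decides whether \(Z_{\pi/2}\) is \((y,-x,z)\) or \((-y,x,z)\). I will settle this by checking against the explicit two-row formula for \(M_{\theta,\phi}\). Either choice works, because the inverse of a vertex permutation is again a vertex permutation.
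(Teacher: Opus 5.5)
Your proposal is correct and follows essentially the same route as the paper. The paper computes directly that \(M_{\theta,\phi}(b) = M_{\theta+\pi/2,\phi}(c)\) with \(c = (-b_y, b_x, b_z)\), i.e.\ \(M_{\theta+\pi/2,\phi} = M_{\theta,\phi} Z_{\pi/2}\), and then lists the eight vertex correspondences, which are exactly the pairings \(v \mapsto Sv\) under your map \(S(x,y,z) = (-y,x,-z)\); your use of the factorization, of \(q_i = -a\,p_i\), and of \(R_\pi = -\mathrm{id}\) just makes the same check more systematic.
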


\begin{proof}
    Compute
    \[ M_{\theta, \phi}(b) = \begin{bmatrix}
        -\sin\theta & \cos\theta & 0 \\
        -\cos\theta \cos\phi & -\sin\theta \cos\phi & \sin\phi
    \end{bmatrix} \begin{pmatrix}
        b_x \\ b_y \\ b_z
    \end{pmatrix} \]
    \[ = \begin{pmatrix}
        -b_x\sin\theta + b_y\cos\theta \\
        \cos\phi(-b_x\cos\theta - b_y\sin\theta) + b_z\sin\phi
    \end{pmatrix}, \]
    and
    \[ M_{\theta + \pi / 2, \phi}(c) = \begin{bmatrix}
        -\cos\theta & -\sin\theta & 0 \\
        \sin\theta\cos\phi & -\cos\theta\cos\phi & \sin\phi
    \end{bmatrix} \begin{pmatrix} 
        c_x \\ c_y  \\  c_z
    \end{pmatrix} \]
    \[ = \begin{pmatrix}
        -c_x\cos\theta -c_y\sin\theta \\
        \cos\phi(c_x\sin\theta - c_y\cos\theta) + c_z\sin\phi
    \end{pmatrix}. \]
    Then if \(c_x = -b_y\), \(c_y = b_x\), and \(c_z = b_z\), we have that \(M_{\theta, \phi}(b) = M_{\theta + \pi/2, \phi}(c)\). In particular, one can check that
    \begin{align*}
        M_{\theta,\phi}(p_0) &= -M_{\theta + \pi / 2, \phi}(p_1), &M_{\theta,\phi}(q_0) &= -M_{\theta + \pi / 2, \phi}(q_1), \\
        M_{\theta,\phi}(p_1) &= -M_{\theta + \pi / 2, \phi}(p_3), &M_{\theta,\phi}(q_1) &= -M_{\theta + \pi / 2, \phi}(q_3), \\
        M_{\theta,\phi}(p_2) &= -M_{\theta + \pi / 2, \phi}(p_0), &M_{\theta,\phi}(q_2) &= -M_{\theta + \pi / 2, \phi}(q_0), \\
        M_{\theta,\phi}(p_3) &= -M_{\theta + \pi / 2, \phi}(p_2), &M_{\theta,\phi}(q_3) &= -M_{\theta + \pi / 2, \phi}(q_2).
    \end{align*}
\end{proof}

Let \(\widehat{\theta} = \theta \pmod{\pi / 2}\) and \(\widehat{\theta'} = \theta' \pmod{\pi / 2}\). Checking whether \(P = R_\alpha \circ M_{\theta, \phi} \mathcal{P}_a\) fits in \(Q = M_{\theta', \phi'}\mathcal{P}_a\) or vice versa is equivalent to checking whether \(P_2\) fits in \(Q_2\) or vice versa, where \(P_2 = R_{\alpha + \beta} \circ M_{\widehat{\theta}, \phi} \mathcal{P}_a\) and \(Q_2 = M_{\widehat{\theta'}, \phi'}\) for some angle \(\beta \in \{0, \pi\}\). Consequently, we may restrict both \(\theta\) parameters to the interval \([0, \pi / 2]\), thereby reducing our parameter space to \([0, 2\pi]^3 \times [0, \pi / 2]^2\).

\begin{lemma}
    \(M_{\theta, \phi + \pi}\mathcal{P}_a = U \circ M_{\theta, \phi}\mathcal{P}_a\), where \(U\) denotes reflection across the \(x\)-axis in the plane.
\end{lemma}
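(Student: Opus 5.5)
The plan is to prove the identity pointwise, vertex by vertex, by the same kind of direct matrix computation used in Lemma~\ref{lem:theta}. In fact it should hold for every point of \(\mathbb{R}^3\), not just the vertices of \(\mathcal{P}_a\), so the tetrahedral symmetry of \(\mathcal{P}_a\) will not be needed.

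First I will write out \(M_{\theta,\phi+\pi}\) using \(\cos(\phi+\pi) = -\cos\phi\) and \(\sin(\phi+\pi) = -\sin\phi\). The first row, \((-\sin\theta, \cos\theta, 0)\), does not involve \(\phi\), so it is unchanged. The second row becomes
\[ (\cos\theta\cos\phi,\ \sin\theta\cos\phi,\ -\sin\phi), \]
which is exactly the negative of the second row of \(M_{\theta,\phi}\). Hence
\[ M_{\theta,\phi+\pi} = \begin{bmatrix} 1 & 0 \\ 0 & -1 \end{bmatrix} M_{\theta,\phi} = U \circ M_{\theta,\phi}, \]
where \(U\) is the reflection \((x,y)\mapsto(x,-y)\) across the \(x\)-axis.

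Next I apply both sides to each vertex \(p_i, q_i\) of \(\mathcal{P}_a\). This gives \(M_{\theta,\phi+\pi}(v) = U(M_{\theta,\phi}(v))\) for every vertex \(v\), so the two projected vertex sets agree. Since \(U\) is linear, it commutes with taking convex hulls, and the projected polygons coincide as well.

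There is no real obstacle here. The only care needed is the sign bookkeeping: the \(\sin\phi\) entry in the \(z\)-column flips sign along with the two \(\cos\phi\) entries. Unlike Lemma~\ref{lem:theta}, no relabeling of vertices is required. Afterwards I would record the consequence in parallel with the remark after Lemma~\ref{lem:theta}. Since \(U\) is an isometry of the plane, a passage \(P \subset Q\) is equivalent to \(UP \subset UQ\). Moreover \(U R_\alpha = R_{-\alpha} U\), so conjugating by \(U\) only changes \(\alpha\) to \(-\alpha\). This lets both \(\phi\) parameters be restricted to \([0,\pi]\), at the cost of replacing \(\alpha\) by \(-\alpha\), which remains in \([0,2\pi]\).
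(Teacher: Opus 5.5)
Your proof is correct and matches the paper's: the first row of $M_{\theta,\phi}$ does not depend on $\phi$ and the second row changes sign, so $M_{\theta,\phi+\pi} = \mathrm{diag}(1,-1)\,M_{\theta,\phi} = U\circ M_{\theta,\phi}$. Your first row $(-\sin\theta,\cos\theta,0)$ is right; the paper's display has a sign typo in that entry.

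One caveat concerns your closing remark, which is not part of the lemma. Conjugating by $U$ shifts $\phi$ and $\phi'$ by $\pi$ at the same time, so this argument can bring only one of them into $[0,\pi]$, not both. This is why the paper's $\Lambda$ halves only one $\phi$-factor.
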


\begin{proof}
    Compute
    \[ M_{\theta, \phi + \pi} = \begin{bmatrix}
        -\sin\theta & -\cos\theta & 0 \\
        \cos\theta\cos\phi & \sin\theta\cos\phi & -\sin\phi
    \end{bmatrix} \]
    \[ = \begin{bmatrix}
        1 & 0 \\ 0 & -1
    \end{bmatrix} \circ \begin{bmatrix}
        -\sin\theta & \cos\theta & 0 \\
        -\cos\theta \cos\phi & -\sin\theta \cos\phi & \sin\phi
    \end{bmatrix} = U \circ M_{\theta, \phi}. \]
\end{proof}

Let \(\widehat{\phi} = \phi \pmod \pi\) and \(\widehat{\phi'} = \phi' \pmod \pi\). If both \(\phi, \phi' > \pi\), then checking whether \(P = R_\alpha \circ M_{\theta, \phi} \mathcal{P}_a\) fits in \(Q = M_{\theta', \phi'}\) or vice versa is equivalent to checking the same for \(P_2, Q_2\), where \(P_2 = R_\alpha \circ M_{\theta, \widehat{\phi}} \mathcal{P}_a\) and \(Q_2 = M_{\theta', \widehat{\phi'}}\). Therefore, we need not have both \(\phi, \phi'\) range over the entirety of \([0, 2\pi]\). In particular, we can restrict \(\phi\) to \([0, \pi]\), thereby reducing our parameter space again by an additional factor of \(2\), yielding a parameter space of \(\Lambda = [0, 2\pi]^2 \times [0, \pi] \times [0, \pi / 2]^2\).

\section{Computational Approach}

\subsection{Implementation Details}

There are a number of technical details that require discussion. First, computing \(s^*\) such that \(P\) fits in \(Q\) up to scale \(s^*\) is achieved by an LP solver. We use \verb|fitScale| to denote a method that takes two polygons \(P\) and \(Q\) and returns \(s^*\). This is a fundamental computation that is used many times when implementing the Corollary. To do so, we implement the subroutine \verb|Check| (see Algorithm \ref{alg:check}), which takes as input a 5-tuple \((\alpha, \theta, \phi, \theta', \phi')\) and a threshold \(b\) and returns a value \(\varepsilon\) such that no Rupert passage is possible within the \(\varepsilon\) side-length box centered on \((\alpha, \theta, \phi, \theta', \phi')\). 
\begin{algorithm}
    \caption{Check}
    \begin{algorithmic}
        \State \(P \gets R_{\alpha} \circ M_{\theta, \phi} \mathcal{P}_{11/20}\)
        \State \(Q \gets M_{\theta', \phi'} \mathcal{P}_{11/20}\)
        \State \(s_P \gets \verb|fitScale|(P,Q)\)
        \State \(s_Q \gets \verb|fitScale|(Q,P)\)
        \If{\(\max(s_P,s_Q) > 1\)}
        
            \Return INVALID
        \EndIf
        \State \(\delta \gets 0.25\)
        \While{\(\delta > b\)}
            \State \(P_- \gets \verb|buffer|(P, -\sqrt{5}\delta)\)
            \State \(Q_+ \gets \verb|buffer|(Q, \sqrt{2}\delta)\)
            \State \(P_+ \gets \verb|buffer|(P, \sqrt{5}\delta)\)
            \State \(Q_- \gets \verb|buffer|(Q, -\sqrt{2}\delta)\)
            \State \(\delta \gets 0.8\delta\)
            \State \(s_- \gets \verb|fitScale|(P_-, Q_+)\)
            \State \(s_+ \gets \verb|fitScale|(P_+, Q_-)\)
            \If{\(\max(s_-, s_+) < 1\)}
                \State \Return \(2\delta\)
            \EndIf
        \EndWhile
        \State \Return \(0\)
    \end{algorithmic}
    \label{alg:check}
\end{algorithm}

This is achieved by starting with a large initial perturbation value \(\delta\), which will result in \(P'\) and \(Q'\) polygons that are too disparate in size allowing on to fit into the other. We then iteratively reduce \(\delta\) until \(P'\) and \(Q'\) cannot fit into each other or until \(\delta\) is below our threshold \(b\). Note that the pairs \((P_-, Q_+)\) and \((P_+, Q_-)\) play the roles \(P'\) and \(Q'\) in the Corollary. We need two checks here to ensure that there is no containment in either direction. The \verb|buffer| method computes buffer polygons as illustrated in Figure \ref{fig:buffer} using the \verb|shapely| Python package.
With \(\verb|Check|\) described, we are now equipped to implement our main algorithm, whose aim is illustrated in Figure \ref{fig:cube_decomp} and pseudocode in Algorithm \ref{alg:orth}. 
\begin{algorithm}
    \caption{Orthtree Search}
    \begin{algorithmic}
        \State \(L \gets \{C_1, \ldots, C_{32}\}\)
        \State \(S \gets \{\}\)
        \While{\(|L| > 0\)}
            \State Pop cube \(C\) from \(L\)
            \State \(w \gets\) side length of \(C\)
            \If{\(C\) has side length less than \(\varepsilon\)}
                \State \(S \gets \verb|Join|(S, \{C\})\)
            \Else
                \State \(\varepsilon \gets \verb|Check|(C)\)
                \If{\(\varepsilon < w\)}
                    \State \(\{C_1', \ldots, C_{32}'\} \gets \verb|Split|(C)\)
                    \State \(L \gets \verb|Join|(L, \{C_1', \ldots, C_{32}'\})\)
                \EndIf
            \EndIf
        \EndWhile
        
        \State \Return{\(S\)}
    \end{algorithmic}
    \label{alg:orth}
\end{algorithm}

Our parameter space \(\Lambda\) can be decomposed into 32 cubes. Initialize a queue \(L\) with these 32 cubes \(\{C_1, \ldots, C_{32}\}\). We maintain a list \(S\) of boxes with side length less than \(b\). For each cube we apply \(\verb|Check|\) to see if we can rule out any Rupert passages in the entire cube. If we can, we move on to the next cube. If we cannot, subdivide the cube into smaller cubes and append them to our queue. What this effectively does is partition our space in the manner of an orthtree, the arbitrary dimension generalization of quadtrees and octrees. This will rule out the regions of our parameter space that are sufficiently far from the \(3\)-dimensional diagonal submanifold of \(\SO(3) \times \SO(3)\).
Here, \verb|Join| appends queues or sets together, and \verb|Split| takes as input a cube \(C\) and returns the \(32\) cubes obtained by splitting \(C\) in half along each coordinate direction. Running our algorithm yields our main result. 


\begin{figure}[h!]
    \centering
    \includegraphics[scale=0.5]{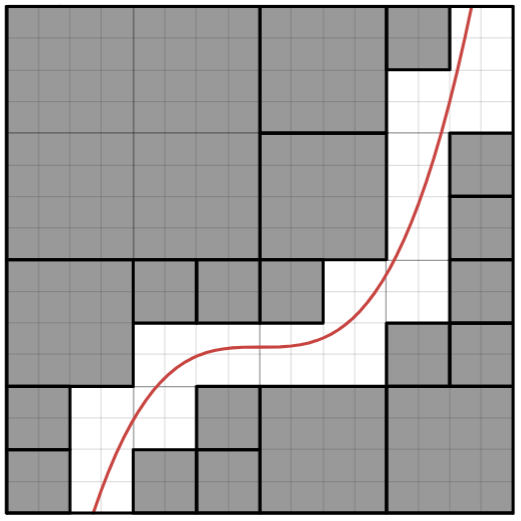}
    \caption{A \(2\)-dimensional illustration of the algorithm.}
    \label{fig:cube_decomp}
\end{figure}

\subsection{Discussion}
With some mild parallelization and a value of \(w = 0.08\) (yielding cubes of side length \(\pi / 64 \approx 0.049\) in \(S\)), our algorithm ran in roughly two days. Reducing \(w\) would greatly increase increase this runtime, due to the exponential factor of \(32\) whenever \verb|Split| is called. This would also increase the amount of \(\Lambda\) ruled to have no Rupert passages, though it is not clear by how much, as we have verified that some \verb|Check| does struggle with some pairs of orientations even if they yield very different projections. This is likely due to the fact that the polygon constructions for the Corollary are too aggressive. 
However, even with unlimited computational power, our techniques would be insufficient to show that \(\mathcal{P}_{11/20}\) is not Rupert as we lack a way to show that \(\mathcal{P}_{11/20}\) is not locally Rupert. That is to say, we do not have a result of the form \textit{for all \(R_1 \in \SO(3)\), there is no Rupert passage \(R_1, R_2\) for any orientation \(R_2\) that is \(\varepsilon\)-close to \(R_1\)}. Such a statement exists in \cite{steininger2025convex} as their polyhedron has a particular nice structure, and perhaps most importantly, is centrally symmetric. This allows them to removing the translation term in the containment condition \ref{eq:fits}, which greatly simplifies local analysis. We do not have a local theorem that accounts for the translation term. Nevertheless, we conjecture that this stellated tetrahedron is not Rupert.

\bibliographystyle{plain}
\bibliography{refs}

\end{document}